\newtheorem{theorem}{Theorem}[section]
\newtheorem{lemma}[theorem]{Lemma}
\newtheorem{Prop}[theorem]{Proposition}
\theoremstyle{definition}
\newtheorem{definition}[theorem]{Definition}
\theoremstyle{remark}
\newtheorem{remark}[theorem]{Remark}
\numberwithin{equation}{section}
\newcommand*{\rom}[1]{\text{\expandafter\@slowromancap\romannumeral #1@}}
\newcommand{\R}{\mathbb{R}}
\newcommand{\N}{\mathbb{N}}
\newcommand{\Z}{\mathbb{Z}}
\newcommand{\Ric}{\operatorname{Ric}}
\newcommand{\Scal}{\operatorname{R}}
\newcommand{\grad}{\operatorname{grad}}
\newcommand{\diver}{\operatorname{div}}
\newcommand{\mylap}[1]{{}^{#1}\!\triangle}
\newcommand{\my}[2]{{}^{#1}{#2}}
\newcommand{\free}[1]{\accentset{\,\circ}{#1}}
\newcommand{\spacetime}{(\R\times\slice,-N^{2}\,dt^{2}+g)}
\newcommand{\photo}{P^{3}}
\newcommand{\slice}{M^{3}}
\newcommand{\sphere}{\mathbb{S}^{2}}
\newcommand{\surf}{\Sigma^{2}}
\newcommand{\RN}{Reiss\-ner-Nord\-str\"om }
\newcommand{\spacet}{\R\times\slice}
\newcommand{\ES}{(\slice,g,N,\Phi)}
\newcommand\beq{\begin{equation}}
\newcommand\eeq{\end{equation}}
\newcommand\ben{\begin{enumerate}}
\newcommand\een{\end{enumerate}}
\newcommand\bit{\begin{itemize}}
\newcommand\eit{\end{itemize}}
\begin{document}
\selectlanguage{english}
\title[Uniqueness of photon spheres]{Uniqueness of photon spheres\\ in electro-vacuum spacetimes}
\author{Carla Cederbaum}
\address{Mathematics Department, Universit\"at T\"ubingen, Germany}
\email{cederbaum@math.uni-tuebingen.de}
\thanks{The first author is indebted to the Baden-W\"urttemberg Stiftung for the financial support of this research project by the Eliteprogramme for Postdocs.}

\author{Gregory J. Galloway}
\address{Mathematics Department, University of Miami, USA}
\email{galloway@math.miami.edu}
\thanks{The second author was partially supported by NSF grant DMS-1313724.}

\date{}

\begin{abstract}
In a recent paper \cite{cedergal}, the authors established the uni\-queness of photon spheres in static vacuum asymptotically flat spacetimes by adapting Bunting and Masood-ul-Alam's proof of static vacuum black hole uniqueness. Here, we establish uniqueness of suitably defined sub-extremal photon spheres in static electro-vacuum asymptotically flat spacetimes by adapting the argument of Masood-ul-Alam~\cite{MuA}.

As a consequence of our result, we can rule out the existence of electrostatic configurations involving multiple ``very compact'' electrically charged bodies and sub-extremal black holes.\end{abstract}

\maketitle

\section{Introduction}\label{sec:intro}
The static, spherically symmetric, electrically charged \RN black hole spacetime of mass $M>0$ and charge $Q\in\R$ can be represented as
\begin{align}
({\mathfrak{L}}^{4}:=\R\times(\R^{3}\setminus \overline{B_{R}(0)}),{\mathfrak{g}}),
\end{align}
where $R:=M+\sqrt{M^{2}-Q^{2}}$ if $Q^{2}\leq M^{2}$ and $R:=0$ otherwise, and the Lorentzian metric ${\mathfrak{g}}$, the lapse $N$, and the electric potential $\Phi$ are given by
\begin{align}\label{RNmetric}
{\mathfrak{g}}&=-{N}^{2}dt^{2}+{N}^{-2}dr^{2}+r^{2}\,\Omega,\quad 
N=\sqrt{1-\frac{2M}{r}+\frac{Q^{2}}{r^2}},
\quad\text{and } \Phi=\frac{Q}{r},
\end{align}
with $\Omega$ denoting the canonical metric on $\mathbb{S}^{2}$ and $r$ the radial coordinate on $\R^{3}$. 

The \RN spacetime is called sub-extremal if $Q^{2}<M^{2}$, extremal if $Q^{2}=M^{2}$ and otherwise super-extremal. The sub-extremal \RN spacetime is well-known to possess a black hole horizon at 
$r=M+\sqrt{M^{2}-Q^{2}}$. 
The super-extremal \RN spacetimes do not contain black holes but rather feature naked singularities. 
Just like the Schwarzschild spacetime, the sub-extremal and extremal \RN spacetimes each possess a unique \emph{photon sphere} $\photo_{+}$, while the super-extremal \RN spacetimes with $8Q^{2}<9M^{2}$ each possess precisely two photon spheres $\photo_{\pm}$ and the \RN spacetimes with $8Q^{2}=9M^{2}$ each possess a unique photon sphere $\photo_{+}=\photo_{-}$, with
\begin{align}\label{def:RNphoto}
\photo_{\pm}:=\R\times\sphere_{\frac{3M}{2}\pm\frac{1}{2}\sqrt{9M^{2}-8Q^{2}}}=\left\lbrace r=\frac{3M}{2}\pm\frac{1}{2}\sqrt{9M^{2}-8Q^{2}}\right\rbrace.
\end{align}
The \RN spacetimes with $8Q^2>9M^2$ do not possess photon spheres.

Here, a photon sphere is understood to be a timelike warped cylinder submanifold $\photo=\R\times\sphere_{r}$ such that any null geodesic of $({\mathfrak{L}}^{4},{\mathfrak{g}})$ that is initially tangent to $\photo$ remains tangent to it. The \RN photon spheres thus model (embedded submanifolds ruled by) photons spiraling around the central black hole or naked singularity ``at a fixed distance''. 

Photon spheres and the notion of trapped null geodesics in general are crucially relevant for questions of dynamical stability in the context of the Einstein equations. Moreover, photon spheres are related to the existence of relativistic images in the context of gravitational lensing. See \cite{CederPhoto,yazadjiev} and the references cited therein for more information on photon spheres.

To the best of our knowledge, it is mostly unknown whether more general spacetimes can possess (generalized) photon spheres, see p.\;838 of \cite{CVE}. Recently, the first author gave a geometric definition of photon spheres in static spacetimes and proved uniqueness of photon spheres in $3+1$-dimensional asymptotically flat static vacuum spacetimes under the assumption that the lapse function of the spacetime regularly foliates the region exterior to the photon sphere \cite{CederPhoto}. This definition and the resultant uniqueness result have since been adopted and generalized from vacuum to (non-extremal) electro-vacuum by Yazadjiev and Lazov \cite{LY}. They, too, assume that the lapse function of the spacetime regularly foliates the exterior region of the photon sphere. In particular, both results assume a priori that there is only one (connected) photon~sphere\footnote{To be precise, both results implicitly assume that the spacetime possesses one (connected) photon sphere arising as its inner boundary. Neither of the results additionally assumes that there cannot be additional photon spheres in the interior of the spacetime. This is particularly relevant in the super-extremal charged case.}.

Adopting the definition of photon spheres in electro-vacuum \cite{LY}, we will prove photon sphere uniqueness for $3+1$-dimensional asymptotically flat, static, electro-vacuum spacetimes without assuming that the lapse function regularly foliates the spacetime. In particular, we allow a priori the possibility of multiple photon spheres. This generalizes our results for vacuum photon spheres and black holes \cite{cedergal}.

To accomplish this, we make use of the rigidity case of the Riemannian positive mass theorem (under the weaker regularity assumed in \cite{Bartnik}, see also \cite{LeeLefloch} and references cited therein), in a manner similar to the proof of black hole uniqueness in electro-vacuum due to Masood-ul-Alam~\cite{MuA} and to our proof of photon sphere uniqueness in vacuum~\cite{cedergal}. Our strategy of proof only allows us to treat what we will call \emph{sub-extremal} photon spheres, see Definition \ref{def:sub-extremal}. For a motivation of this definition and a (partial) discussion of the extremal and super-extremal situation see Appendix \ref{app:sub-extremal}.

This paper is organized as follows. In Section \ref{sec:setup and definition}, we will recall the definition and a few properties of photon spheres in static, electro-vacuum, asymptotically flat spacetimes from \cite{CederPhoto,yazadjiev}. In Section~\ref{sec:proof}, we will prove that the only such spacetimes admitting sub-extremal photon spheres are the sub-extremal \RN spacetimes:
{\renewcommand{\thetheorem}{\ref{thm:main}}
\begin{theorem}
Let $\ES$ be an electrostatic system which is electro-vacuum, asymptotic to Reissner-Nordstr\"om, and possesses a (possibly disconnected) sub-extre\-mal photon sphere $(\photo,p)\hookrightarrow\spacetime$, arising as the inner boundary of $\spacet$. Let $M$ denote the ADM-mass and $Q$ the total charge of $\spacetime$. Then $M>0$, $Q^{2}<M^{2}$, and $\spacetime$ is isometric to the region $\lbrace r\geq \frac{3M}{2}+\frac{1}{2}\sqrt{9M^{2}-8Q^{2}}\rbrace$ exterior to the photon sphere $\lbrace r=\frac{3M}{2}+\frac{1}{2}\sqrt{9M^{2}-8Q^{2}}\rbrace$ in the \RN spacetime of mass~$M$ and charge $Q$. In particular, $(\photo,p)$ is connected and a cylinder over a topological sphere.
\end{theorem}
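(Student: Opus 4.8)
The plan is to adapt Masood-ul-Alam's conformal gluing technique for static electro-vacuum black hole uniqueness~\cite{MuA} to the photon sphere setting, in the spirit of our vacuum argument~\cite{cedergal}. The starting point is the system of static electro-vacuum field equations on $\ES$, which express $\Ric$, $\triangle_g N$ and the Maxwell equation for $\Phi$ in terms of $N$, $\Phi$ and their first derivatives, together with the geometric conditions satisfied on the photon sphere. On each connected component of the inner boundary $\partial\slice$ these conditions force $N$ and $\Phi$ to be constant, pin down the mean curvature, and relate the boundary values of $N$, $\Phi$ and the second fundamental form; the sub-extremality hypothesis of Definition~\ref{def:sub-extremal} is precisely the inequality among these boundary data that will be consumed below.

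Next I would introduce the two conformal factors
\[
\omega_\pm := \tfrac12\sqrt{(1\pm N)^2-\Phi^2},
\]
normalised so that $N\to1$ and $\Phi\to0$ at spatial infinity, and set $g_\pm:=\omega_\pm^4\,g$; note that $\omega_\pm$ reduces to the vacuum factor $\tfrac12(1\pm N)$ of~\cite{cedergal} when $\Phi\equiv0$. By the transformation law of scalar curvature under a conformal change in dimension three,
\[
\Scal_{g_\pm}=\omega_\pm^{-5}\bigl(\Scal_g\,\omega_\pm-8\,\triangle_g\,\omega_\pm\bigr),
\]
and substituting the static electro-vacuum equations for $\Scal_g$, $\triangle_g N$ and $\triangle_g\Phi$ should yield $\Scal_{g_\pm}\equiv0$: the $-\Phi^2$ term in $\omega_\pm$ is exactly what absorbs the Maxwell contribution to $\Ric$. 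Since $\omega_+\to1$ at infinity, $(\slice,g_+)$ is asymptotically flat, and since $\omega_-\to 0$, the factor $\omega_-$ compactifies the asymptotic end of $g_-$ to a single interior point. A delicate point here is that $\omega_-$ must stay strictly positive up to $\partial\slice$ (guaranteed by sub-extremality of the photon sphere) and must have the right decay $\omega_-\sim \tfrac{1}{2r}\sqrt{M^2-Q^2}$ at infinity, which presupposes $Q^2<M^2$.

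I would then glue $(\slice,g_+)$ and $(\slice,g_-)$ along their common boundary $\photo$, component by component. Because $N$ and $\Phi$ are constant on each boundary component, $\omega_+$ and $\omega_-$ are distinct positive constants there, so after rescaling the $g_-$ copy by the constant $(\omega_+/\omega_-)^4$ the induced boundary metrics match, producing a Lipschitz metric $\hat g$ on the doubled manifold $\hat M$. For $\hat g$ to have nonnegative distributional scalar curvature in the sense of~\cite{Bartnik,LeeLefloch}, the mean curvatures $H_\pm$ of $\photo$ computed in $g_\pm$ must satisfy the correct sign condition across the seam; verifying that $H_++H_-\geq0$ from the photon sphere boundary conditions, and establishing $M>0$ and $Q^2<M^2$ via the field equations and a maximum principle for $\omega_-$, is the technical heart of the argument and the main obstacle I anticipate. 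Unlike a horizon, a photon sphere is not totally geodesic and $N$ does not vanish on it, so this boundary computation genuinely differs from the black hole case and is exactly where the notion of sub-extremality is used.

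Granting this, $(\hat M,\hat g)$ is a complete manifold with a single asymptotically flat end, nonnegative distributional scalar curvature, and — by the expansions $N=1-\tfrac{M}{r}+O(r^{-2})$, $\Phi=\tfrac{Q}{r}+O(r^{-2})$ fed into $\omega_+$ — vanishing ADM mass. The rigidity case of the low-regularity Riemannian positive mass theorem then forces $(\hat M,\hat g)$ to be isometric to flat $(\R^3,\delta)$. Undoing the gluing and the conformal change, flatness of $g_+=\omega_+^4 g$ propagates back to show that the level sets of $N$ are round spheres, that $N$ and $\Phi$ depend only on a single radial variable, and that $\ES$ coincides with the region of the \RN slice exterior to its photon sphere; connectedness of $\photo$ then follows because $\R^3$ has a single end, completing the identification claimed in Theorem~\ref{thm:main}.
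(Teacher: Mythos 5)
Your toolkit (Masood-ul-Alam conformal factors plus the rigidity case of the low-regularity positive mass theorem) is the right one, but the step you yourself flag as ``the technical heart'' --- gluing $(\slice,g_+)$ to $(\slice,g_-)$ directly along the photon sphere and verifying a mean-curvature jump condition there --- is not a fillable technical gap; it is where the approach breaks down, and it is precisely the obstacle the paper's proof is built to avoid. Matching induced metrics and checking the Miao-type inequality at your seam requires the constant boundary values $N_i$ (and $\Phi_i$), and these are \emph{global} unknowns, anchored only by $N\to1$, $\Phi\to0$ at infinity; none of the local photon sphere identities (Proposition~\ref{prop}, Lemma~\ref{untrapped}) nor sub-extremality constrain them --- Definition~\ref{def:sub-extremal} restricts $H_ir_i$, equivalently $q_i^2/r_i^2$ via \eqref{eq:constraintqi}, and says nothing about $N_i$ itself. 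Concretely, in the vacuum specialization $\omega_\pm=\tfrac12(1\pm N)$, using $\nu(N)_i=\tfrac12 N_iH_i$ from \eqref{eq:prop'} one computes that the mean curvatures of the seam (with your constant rescaling of the $g_-$ copy) satisfy the correct distributional sign if and only if $3N_i^2\geq1$ on $\Sigma_i$. This holds with \emph{equality} for the Schwarzschild photon sphere ($N^2=1/3$ at $r=3M$), so establishing it a priori is tantamount to the uniqueness theorem itself; your argument has no mechanism for it. This is exactly why the paper never doubles across the photon sphere: instead, its Step 1 glues into each component $\Sigma_i$ an explicit \RN neck (the piece between horizon and photon sphere) with mass $\mu_i$ and charge $q_i$ computed from the boundary data via \eqref{def:qi}, \eqref{def:mui}, exploiting the symmetry $N\to\alpha N$, $\Phi\to\alpha\Phi+\beta$ of the field equations --- available on the inserted neck, but not on $\slice$, whose lapse is pinned by the asymptotics --- to absorb the unknown constants $N_i,\Phi_i$. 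The doubling is then performed across the resulting \emph{minimal} boundary, where $\widetilde N=0$ makes the matching automatic (indeed smooth), and only afterwards is the single conformal factor $\Omega=\tfrac14((1+\widetilde N)^2-\widetilde\Phi^2)$ applied to the doubled manifold.

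Three further points. First, rescaling the $g_-$ copy by the single constant $(\omega_+/\omega_-)^4$ cannot work when $\photo$ is disconnected: $\omega_+/\omega_-$ takes different constant values on different components $\Sigma_i$, and you have only one copy of $g_-$ to rescale; the paper's per-component constants $\alpha_i,\beta_i$ in \eqref{def:alphai}, \eqref{def:betai} exist precisely to handle this. Second, your factor $\omega_-=\tfrac12\sqrt{(1-N)^2-\Phi^2}$ is not even real-valued near infinity unless $M^2>Q^2$, since $(1-N)^2-\Phi^2\sim(M^2-Q^2)/r^2$; you admit this ``presupposes $Q^2<M^2$'', but that is part of the conclusion, not a hypothesis. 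The paper's positivity of $\Omega$ is instead proved by a maximum principle for $\widetilde N-1\pm\widetilde\Phi$, with sub-extremality entering through the boundary sign condition $\widetilde\nu(\widetilde N-1\pm\widetilde\Phi)\vert_{\surf_i}>0\Leftrightarrow H_i^2r_i^2>1$ as in \eqref{eq:sign2}, and $M>0$, $Q^2<M^2$ are obtained only at the very end from $Q=q_1$, $M=\mu_1$, $\mu_1^2>q_1^2$. Third, your claim $\Scal_{g_\pm}\equiv0$ is false in electro-vacuum: the Masood-ul-Alam/Heusler computation gives only $\Scal_{g_\pm}\geq0$, with the scalar curvature proportional to $\vert(1-N^2-\Phi^2)\,d\Phi+2\Phi N\,dN\vert^2$, which vanishes identically in \RN but not in general. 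That error alone would not sink the proof, since non-negativity suffices for the positive mass theorem; the seam condition above is what does.
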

\addtocounter{theorem}{-1}}

\begin{remark} 
As in \cite{CederPhoto,LY,cedergal}, one does not need to assume a priori that the mass $M$ of the spacetime is positive or that the total mass $M$ and charge $Q$ satisfy the sub-extremality condition $Q^{2}<M^{2}$;  this is a consequence of the theorem.  In particular, the existence of sub-extremal photon spheres in static, electro-vacuum, asymptotically flat spacetimes of non-positive mass is ruled out.
\end{remark}

\begin{remark}\label{rem:multiple}
In addition to the photon sphere inner boundary, our arguments allow for the presence of non-degenerate (Killing) horizons as additional components of the boundary of the spacetime.  These would be treated just as in the original argument by Masood-ul-Alam \cite{MuA}. For simplicity, we have assumed no such horizon boundaries.
\end{remark}

In Section \ref{sec:nbody}, we explain how Theorem \ref{thm:main} can be applied to the so called electrostatic $n$-body problem in General Relativity, yielding the following corollary:

{\renewcommand{\thetheorem}{\ref{thm:nbody}}
\begin{theorem}[No static configuration of $n$ ``very compact'' electrically charged bodies and black holes]
There are no electrostatic equilibrium configurations of $k\in~\!\!\N$ possibly electrically charged bodies and $n\in\N$ possibly electrically charged non-degenerate black holes with $k+n>1$ in which each body is surrounded by its own sub-extremal photon sphere.
\end{theorem}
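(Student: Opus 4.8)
The plan is to argue by contradiction and to reduce the statement directly to Theorem~\ref{thm:main}. Suppose such an electrostatic equilibrium configuration of $k$ bodies and $n$ non-degenerate black holes with $k+n>1$ exists, each body being enclosed by its own sub-extremal photon sphere. Let $\slice$ denote the spatial region lying exterior to all $k$ bodies and all $n$ black holes. By construction, its inner boundary is the disjoint union of the $k$ photon spheres surrounding the bodies together with the $n$ black hole horizons. Since every matter source is confined to the interior of its surrounding photon sphere, the region $\slice$ is electro-vacuum, and, being part of an equilibrium configuration, it is static and asymptotic to \RN. Thus $\ES$ is an electrostatic system of exactly the type treated in Theorem~\ref{thm:main}, save for the possible presence of the horizon components.

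First I would treat the case $k\geq1$, in which at least one boundary component is a sub-extremal photon sphere. Invoking Theorem~\ref{thm:main}, extended to allow additional non-degenerate horizon boundary components exactly as indicated in Remark~\ref{rem:multiple}, the region $\slice$ must be isometric to the region $\lbrace r\geq \frac{3M}{2}+\frac{1}{2}\sqrt{9M^{2}-8Q^{2}}\rbrace$ exterior to the photon sphere in a sub-extremal \RN spacetime. That model region carries a single, connected photon sphere as its inner boundary and, since its photon sphere radius exceeds the horizon radius $M+\sqrt{M^{2}-Q^{2}}$, contains no horizon at all. Hence the isometry forces $k=1$ and $n=0$, so that $k+n=1$, contradicting $k+n>1$.

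It remains to dispose of the case $k=0$, in which there are no bodies and hence $n\geq2$ black holes but no photon spheres, so that Theorem~\ref{thm:main} does not apply directly. Here $\slice$ is simply a static, electro-vacuum, asymptotically \RN spacetime whose inner boundary consists solely of non-degenerate horizons, and classical static electro-vacuum black hole uniqueness (Masood-ul-Alam~\cite{MuA}) forces $n=1$, again contradicting $n\geq2$. In either case we reach a contradiction, which proves the theorem.

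I expect that the only genuine subtlety lies in verifying that the exterior region $\slice$ really meets all the hypotheses of Theorem~\ref{thm:main}: that the disjoint union of the individual photon spheres qualifies as a (possibly disconnected) sub-extremal photon sphere inner boundary, and that the additional horizon components do not obstruct the Masood-ul-Alam-type conformal gluing and positive-mass rigidity argument underlying Theorem~\ref{thm:main}. This is precisely the extension foreshadowed in Remark~\ref{rem:multiple}; once it is in place, the corollary follows immediately, the substantive analytic work having already been carried out in the proof of Theorem~\ref{thm:main}.
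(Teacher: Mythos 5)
Your proposal is correct and follows essentially the same route as the paper: pass to the exterior region $\slice=\overline{M}^{3}\setminus\cup_{i}\Omega_{i}$, apply Theorem~\ref{thm:main} (with Remark~\ref{rem:multiple} to accommodate horizon boundary components), and conclude from the rigidity of the \RN exterior---connected photon sphere, no horizon---that $k+n=1$. Your explicit separate treatment of the $k=0$ case via Masood-ul-Alam's black hole uniqueness theorem~\cite{MuA} is a slightly more careful bookkeeping than the paper's terse argument, but it is the same underlying reduction.
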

\addtocounter{theorem}{-1}}

\section{Setup and definitions}\label{sec:setup and definition}
Static electro-vacuum spacetimes model exterior regions of static configurations of electrically charged stars and/or black holes. They can be mathematically described as \emph{electrostatic systems} $\ES$, where $(\slice,g)$ is a smooth Riemannian manifold, geodesically complete up to its inner boundary, $N:\slice\to\R^+$ is the (smooth) \emph{lapse} function, $\Phi:\slice\to\R$ denotes the (smooth) \emph{electric potential}, and the following symmetry reduced Einstein-Maxwell or \emph{electro-vacuum} equations hold:
\begin{align}\label{SMEelectrovac1}
\diver\left(\frac{\grad \Phi}{N}\right)&=0\\\label{SMEelectrovac2}
\triangle N&=\frac{\vert d\Phi\vert^2}{N}\\\label{SMEelectrovac3}
N\,{\Ric}&={\nabla}^2 N+\frac{\vert d\Phi\vert^2\,g-2\,d\Phi\otimes d\Phi}{N},
\end{align}
on $\slice$, where $\diver$, $\grad$, $\triangle$, $\nabla^{2}$, and $\Ric$ denote the covariant divergence, gradient, Laplacian, Hessian, and Ricci curvature of the metric $g$, respectively. 

Such electrostatic systems arise as (canonical) time-slices of (standard) static spacetimes
\begin{align}\label{ST}
\spacetime
\end{align}
which accordingly solve the Einstein-Maxwell equations. An electrostatic system is called \emph{asymptotic to Reissner-Nordstr\"om} if the manifold~$\slice$ is diffeomorphic to the union of a compact set and an open \emph{end} $E^3$ which is diffeomorphic to $\R^3\setminus \overline{B}$ with $B$ the open unit ball in $\R^3$. Furthermore, we require that, in the end $E^3$, the lapse function $N$, the electric potential $\Phi$, the metric $g$, and the coordinates $(x^{i}):E^3\to\R^3\setminus \overline{B}$ combine such that there are constants $M,Q\in\R$ for which we find the decay
\begin{align}\label{AF}
g_{ij}-\left(1+\frac{2M}{r}\right)\delta_{ij}&\in W^{k,q}_{-\tau}(E)\\\label{NAF}
N-\left(1-\frac{M}{r}\right)\phantom{g_{ij}}&\in W^{k,q}_{-\tau}(E)\\\label{PhiAF}
\Phi\;-\;\;\;\left(\frac{Q}{r}\right)\phantom{g_{ij}-}&\in W^{k,q}_{-\tau}(E)
\end{align}
in the weighted Sobolev space $W^{k,q}_{-\tau}(E)$ for some $\tau>3/2$, $\tau\notin\Z$, $k\geq2$, $q>4$, where we are using Bartnik's Sobolev space conventions \cite{Bartnik}. Here, $\delta$ denotes the Euclidean metric on~$\R^3$ and  $r:=\sqrt{(x^1)^{2}+(x^2)^{2}+(x^3)^{2}}$ denotes the radial coordinate corresponding to the coordinates $(x^{i})$ on $E^3$. Combining \eqref{SMEelectrovac1} through \eqref{SMEelectrovac3}, one finds
\begin{align}\label{eq:scal}
\Scal&=\frac{2\,\vert d\Phi\vert^2}{N^2}
\end{align}
on $\slice$, where $\Scal$ denotes the scalar curvature of $g$.

It is convenient for us, as assumed above, to take $M^3$ to be a manifold with boundary. 
However, it is to be understood that the electrostatic system extends to a slightly larger electrostatic system containing $\partial M$ and satisfying $N>0$.

\subsection{Definition of photon spheres}
We adopt the definition of photon spheres from \cite{CederPhoto} with the electrostatic extension suggested by \cite{LY}:
\begin{definition}[Photon sphere]\label{def:photo}
Let $\ES$ be an electrostatic system with associated static spacetime $\spacetime$ according to \eqref{ST}.
A timelike embedded hypersurface $(\photo,p) \hookrightarrow\spacetime$ is called a \emph{(generalized) photon sphere} if the embedding $(\photo,p) \hookrightarrow\spacetime$ is umbilic and if the lapse function $N$ and the electric potential $\Phi$ are constant on each connected component of~$\photo$.
\end{definition}

\begin{remark}
Note that, a priori, the definition of a \emph{(generalized) photon sphere}  neither requires the photon sphere to be connected nor to have the topology of a cylinder over a sphere. This enables us to treat multiple photon spheres at once. In fact, as a consequence of Proposition \ref{prop} and Lemma \ref{untrapped} presented below, we will see that the topology of each component of a (generalized) photon sphere  is that of a cylinder over a sphere.
\end{remark}

Our sign convention is such that the second fundamental form $h$ of an isometrically embedded $2$-surface $(\surf,\sigma)\hookrightarrow (\slice,g)$ with respect to the outward unit normal vector field $\nu$ is chosen such that $h(X,Y):=g(\my{g}{\nabla}_{X} \nu,Y)$ for all $X,Y\in\Gamma(\surf)$. In other words, the round spheres in flat space have positive mean curvature with respect to the outward unit normal according to our convention.

\begin{remark}\label{rem:vac}
A photon sphere as defined here and in \cite{LY} is a generalization of photon spheres as defined in \cite{CederPhoto}, where no assumptions are made about fields other than $N$ and the metric/second fundamental form on the timelike hypersurface $(\photo,p)$.
\end{remark}

\subsection{Properties of photon spheres in static electro-vacuum}\label{subsec:props}
Let $(\photo,p)$ be a photon sphere which arises as the inner boundary of a static spacetime 
$\spacetime$, and let $N_i:=N\vert_{\photo_{i}}$ denote the (constant) value of $N$ on the connected component 
$(\photo_i,p_i)$ of $(\photo,p)$ for all $i=1,\dots,I$. As the photon sphere $(\photo,p)$ arises as the inner boundary of a standard static spacetime as in \eqref{ST}, each component $(\photo_{i},p_{i})$ must be a warped cylinder $(\photo_i,p_i)=(\R\times\surf_i,-N_i^2dt^2+\sigma_i)$,
where $\surf_i$ is the (necessarily compact) intersection of the photon sphere component $\photo_i$ and the time slice $\slice$ and $\sigma_i$ is the (time-independent) induced metric on $\surf_i$. We set 
$\surf:=\cup_{i=1}^I\surf_i$ and let $\sigma$ be the metric on $\surf$ that coincides with $\sigma_i$ on $\surf_i$. 

Then, photon spheres have the following local\footnote{which are derived without appealing to the asymptotic decay at infinity.} properties:
\begin{Prop}[Cederbaum \cite{CederPhoto}, Yazadjiev-Lazov \cite{LY}]\label{prop}
Let $\ES$ be an electrostatic system solving the electro-vacuum equations \eqref{SMEelectrovac1}-\eqref{SMEelectrovac3}, and let $(\photo,p)\hookrightarrow\spacetime$ be a (generalized) photon sphere arising as the inner boundary of the associated spacetime $\spacetime$. We write 
\begin{align}
\left(\photo,p\right)=\left(\R\times\surf,-N^{2}dt^2+\sigma\right)= \bigcup_{i=1}^I\left(\R\times\surf_i,-N_i^2dt^2+\sigma_i\right),
\end{align}
where each $\photo_{i}=\R\times\surf_{i}$ is a connected component of $\photo$. Then the embedding $(\surf,\sigma)\hookrightarrow(\slice,g)$ is totally umbilic with constant mean curvature $H_i$ on the component $\surf_i$. The normal derivatives of the lapse function $N$ and the electric potential $\Phi$ in direction of the outward unit normal $\nu$ to $\surf$, $\nu(N)$ and $\nu(\Phi)$, respectively, are also constant on every component $(\surf_i,\sigma_i)$, and we set $\nu(N)_i := \nu(N)\vert_{\surf_{i}}$, $\nu(\Phi)_{i}:=\nu(\Phi)\vert_{\surf_{i}}$.  Moreover, the scalar curvature of the component $(\surf_i,\sigma_i)$, $\my{\sigma_i}{\Scal}$, is a non-negative constant which can be computed from the other constants via
\begin{align}\label{eq:scalprop}
\my{\sigma}{\Scal}_i&=\frac{3}{2}H_{i}^{2}+2\left(\frac{(\nu(\Phi)_{i})}{N_{i}}\right)^{2}.
\end{align}
For each $i\in\lbrace1,\dots,I\rbrace$, either $H_{i}=\nu(\Phi)_{i}=0$ and $\surf_{i}$ is a totally geodesic flat torus or $\surf_{i}$ is an intrinsically and extrinsically round CMC sphere for which the above constants are related via
\begin{align}\label{eq:prop}
N_iH_i^{2}&=2\nu(N)_iH_{i}.
\end{align}
\end{Prop}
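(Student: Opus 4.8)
The plan is to translate the single umbilicity condition on the three-dimensional timelike hypersurface $\photo$ into separate conditions on the two-dimensional cross section $\surf$ and on the lapse, and then to extract constancy and the curvature identity from the field equations \eqref{SMEelectrovac1}--\eqref{SMEelectrovac3}. First I would compute the second fundamental form $\widehat h$ of $\photo=\R\times\surf$ in $\spacetime$ with respect to $\nu$. Because $\partial_t$ is Killing, $\nu$ is $t$-independent, and the metric splits, the tangential-tangential block of $\widehat h$ equals the second fundamental form $h$ of $(\surf,\sigma)\hookrightarrow(\slice,g)$, the mixed block $\widehat h(\partial_t,\cdot)$ vanishes, and a short static computation gives $\widehat h(\partial_t,\partial_t)=-N\,\nu(N)$ against $\mathfrak g(\partial_t,\partial_t)=-N^2$. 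Hence umbilicity of $\photo$ is equivalent to $\surf$ being umbilic in $\slice$, i.e. $h=\tfrac{H}{2}\sigma$, together with the relation $\nu(N)=\tfrac{H}{2}N$ on each component. Since $N\equiv N_i$ there, this already shows $\nu(N)_i\propto H$ and yields \eqref{eq:prop} upon multiplying by $H$.

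Next I would prove that $H$ (and hence $\nu(N)$) is constant on each $\surf_i$. The key observation is that the Maxwell field $F=dt\wedge d\Phi$ pulls back to zero on $\photo$: since $\Phi$ is constant on each component, $d\Phi$ is purely normal there, so $F$ has no components tangential to $\photo$. Consequently the electromagnetic stress-energy, and therefore the spacetime Ricci curvature $\Ric^{\mathfrak g}$, satisfies $\Ric^{\mathfrak g}(\nu,Z)=0$ for every $Z$ tangent to $\photo$ (including $Z=\partial_t$). Feeding this into the contracted Codazzi equation for the umbilic hypersurface $\photo$ forces its spacetime mean curvature to be constant on each connected component; since that mean curvature equals $\tfrac{\nu(N)}{N}+H=\tfrac{3}{2}H$, the constancy of $H$ and of $\nu(N)$ follows. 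I expect this four-dimensional Codazzi step to be the natural engine for constancy, because the purely spatial Codazzi identity on $\surf$ only reproduces the tautology $\Ric(\nu,Y)=\tfrac12 Y(H)$.

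For the curvature identity I would use that $\grad N$ and $\grad\Phi$ are normal along $\surf$, so their tangential Hessians satisfy $\nabla^2N|_{T\surf}=\nu(N)\,h$ and $\nabla^2\Phi|_{T\surf}=\nu(\Phi)\,h$. Tracing \eqref{SMEelectrovac2} gives $\nabla^2N(\nu,\nu)$, the tangential part of \eqref{SMEelectrovac3} gives $\Ric|_{T\surf}$ proportional to $\sigma$, and its normal-normal part gives $\Ric(\nu,\nu)=-\tfrac{H\nu(N)}{N}$; combining these with \eqref{eq:scal} and the traced Gauss equation $\my{\sigma}{\Scal}=\Scal-2\Ric(\nu,\nu)+\tfrac12 H^2$ and inserting $\nu(N)=\tfrac{H}{2}N$ yields \eqref{eq:scalprop} pointwise, so in particular $\my{\sigma}{\Scal}\geq 0$. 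The hard part is the remaining constancy of $\nu(\Phi)$, which does not drop out of the algebraic relations above: I would obtain it by differentiating the Maxwell equation \eqref{SMEelectrovac1} tangentially to $\surf$ and commuting covariant derivatives (a Bochner-type computation), using the already-established constancy of $N$, $H$, $\nu(N)$ to reduce this to an elliptic equation for $\nu(\Phi)$ on the closed surface $\surf_i$ whose only solutions are constants.

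Finally, with $N$, $H$ and $\nu(\Phi)$ all constant on $\surf_i$, the identity \eqref{eq:scalprop} shows that $\my{\sigma}{\Scal}_i=2K_i$ is a non-negative constant, so each $\surf_i$ is a closed surface of constant Gauss curvature $K_i\geq 0$. If $K_i>0$ it is a round sphere, which is moreover umbilic with constant mean curvature, i.e. an intrinsically and extrinsically round CMC sphere, and \eqref{eq:prop} holds as above. If $K_i=0$, then both non-negative summands in \eqref{eq:scalprop} vanish, so $H_i=\nu(\Phi)_i=0$; thus $h=\tfrac{H_i}{2}\sigma=0$ and $\surf_i$ is a closed totally geodesic flat surface, hence a flat torus. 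I expect the single genuine obstacle in this program to be the constancy of $\nu(\Phi)$, since every other step is either a direct consequence of the static product structure or of the four-dimensional Codazzi identity.
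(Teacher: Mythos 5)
You should know at the outset that the paper contains no proof of Proposition \ref{prop}: it is quoted from \cite{CederPhoto} (vacuum) and \cite{LY} (electro-vacuum), so your proposal can only be measured against those arguments and against what the statement requires. On that basis, most of your outline is correct and is essentially the standard route: the splitting of spacetime umbilicity into $h=\frac{H}{2}\sigma$ plus $\nu(N)=\frac{H}{2}N$ (hence \eqref{eq:prop}); the observation that $d\Phi$ is normal along $\photo$, so the electromagnetic stress-energy satisfies $T(\nu,Z)=0$ and hence $\Ric^{\mathfrak{g}}(\nu,Z)=0$ for $Z$ tangent to $\photo$; the contracted Codazzi equation forcing the umbilicity factor, and so $H_i$ and $\nu(N)_i$, to be constant; the Gauss-equation computation of \eqref{eq:scalprop}; and the closing dichotomy. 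One incidental slip: the purely Riemannian route is not a tautology. With the paper's conventions, contracted Codazzi on $\surf\hookrightarrow\slice$ gives $\Ric(X,\nu)=-\frac{1}{2}X(H)$ for umbilic $\surf$, while the mixed ($X$--$\nu$) component of \eqref{SMEelectrovac3}, using $X(\Phi)=0$ and $\nu(N)=\frac{H}{2}N$, gives $N\Ric(X,\nu)=X(\nu(N))=\frac{N}{2}X(H)$; together these force $X(H)=0$, a perfectly good second proof of constancy of $H_i$.

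The genuine gap is the one you flagged, and your proposed repair cannot close it: no Bochner-type manipulation of \eqref{SMEelectrovac1} along $\surf$ can produce an elliptic equation for $\nu(\Phi)$ on $\surf_i$, because the field equations impose no local constraint at all on the tangential variation of $\nu(\Phi)$. Regard $(\sigma,h,N,\nu(N),\Phi,\nu(\Phi))$ as Cauchy data on $\surf$ for the static Einstein--Maxwell system. Restricted to $\surf$, \eqref{SMEelectrovac1} reads $\nabla^{2}\Phi(\nu,\nu)+H\nu(\Phi)-\frac{\nu(N)\nu(\Phi)}{N}=0$, i.e.\ it only determines the second \emph{normal} derivative of $\Phi$ (an evolution equation); the genuine constraints on the data are the Codazzi-type relation, which on photon-sphere data forces exactly $dH=0$ and $d(\nu(N))=0$ as above (the $\nu(\Phi)$ term is multiplied by $X(\Phi)=0$), and the Gauss-type relation, which is \eqref{eq:scalprop} read pointwise: it ties the Gauss curvature of $\sigma$ to $\nu(\Phi)^{2}$ without forcing either to be constant. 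Consequently, data with $N\equiv N_i$, $\Phi\equiv\Phi_i$, $h=\frac{H}{2}\sigma$, $\nu(N)=\frac{H}{2}N_i$ and \emph{non-constant} $\nu(\Phi)$, with $\sigma$ chosen so that its Gauss curvature equals $\frac{3}{4}H^{2}+\nu(\Phi)^{2}/N_i^{2}$, satisfies every constraint, and Cauchy--Kovalevskaya applied to the analytic reduced elliptic system then yields a local electro-vacuum solution containing a generalized photon sphere in the sense of Definition \ref{def:photo} with non-constant $\nu(\Phi)$ and non-round $\sigma$. So constancy of $\nu(\Phi)$ --- and with it the intrinsic roundness of $\surf_i$, which is precisely what Step 1 of the paper's main proof uses via $\sigma_i=r_i^{2}\,\Omega$ --- is not a local consequence of umbilicity plus constancy of $N$ and $\Phi$; it must come from global structure. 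In \cite{LY} it follows from the assumed foliation of the exterior by $N$ together with the functional relationship $N^{2}=1+\Phi^{2}-2(M/Q)\Phi$ (recalled in Appendix \ref{app:sub-extremal}), which gives $\nu(\Phi)=N_i\nu(N)_i/(\Phi_i-M/Q)$ on $\surf_i$. Any complete proof of the proposition as stated has to invoke such global input (asymptotics, maximum principles, the photon sphere arising as inner boundary), which your outline never does.
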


\begin{remark}
We have assumed the existence of a globally defined electric potential $\Phi$, which will usually be asserted in a static electro-vacuum spacetime $\spacetime$ by using that $\slice$ is simply connected.  
In fact, one can show in our setting that $\slice$ is necessarily simply connected; see Remark \ref{ends}.
\end{remark}

The following lemma from \cite{cedergal} also applies in the electro-vacuum case as the electro-magnetic energy-momentum tensor satisfies the null energy condition (NEC), see also Remark \ref{rem:vac}:
\begin{lemma}[Cederbaum-Galloway {\cite[Lemma 2.6]{cedergal}}, Galloway-Miao {\cite[Theorem~3.1]{GalMiao}}]\label{untrapped}
Let $\spacetime$ be a static, asymptotically flat spacetime satisfying the NEC
and having a photon sphere inner boundary $(\photo,p)\hookrightarrow\spacetime$.  Then each component $\surf_{i}$ of $\surf:=\photo\cap\slice$ has (constant) positive mean curvature, $H_i >0$.
\end{lemma}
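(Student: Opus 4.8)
The plan is to translate the claimed inequality $H_i>0$ into a statement about the null expansions of $\surf_i$ in the ambient spacetime, and then to exclude the non-positive case by a Raychaudhuri/focusing argument fed by the NEC together with the asymptotically flat region as a barrier near infinity. The point to keep in mind is that the sign of $H_i$ is \emph{not} detectable from the intrinsic and umbilic data of $\surf_i$ supplied by Proposition \ref{prop} alone -- it records which side of $\surf_i$ carries the asymptotically flat end -- so the global structure of $\spacetime$ must enter.

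First I would record the null expansions. Since $\spacetime$ is static, the slice $\slice=\{t=0\}$ is totally geodesic, i.e. its second fundamental form vanishes. Writing $u:=N^{-1}\partial_t$ for the future unit normal of $\slice$ and $\nu$ for the outward unit normal of $\surf_i$ in $(\slice,g)$ (pointing toward the asymptotic end, per our sign convention), the two future null normals of $\surf_i$ are $\ell_\pm:=u\pm\nu$, and computing $\theta_\pm:=\diver_{\surf_i}\ell_\pm$ while using that the slice is totally geodesic yields $\theta_+=H_i$ and $\theta_-=-H_i$ on each component. Hence $H_i>0$ is exactly the assertion that $\surf_i$ is strictly outer untrapped, while $H_i=0$ (the flat-torus alternative in Proposition \ref{prop}) would make $\surf_i$ marginally outer trapped and $H_i<0$ would make it outer trapped.

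Next, suppose for contradiction that $\theta_+\le0$ on some component $\surf_i$, and follow the outgoing null geodesic congruence leaving $\surf_i$ in the direction $\ell_+$, i.e. out toward the asymptotically flat end. The Raychaudhuri equation together with the NEC $\Ric(\ell_+,\ell_+)\ge0$ gives $\tfrac{d}{d\lambda}\theta_+\le-\tfrac12\theta_+^2$, so $\theta_+$ stays non-positive and, if ever strictly negative, reaches $-\infty$ at a focal point in finite affine parameter. In the asymptotically flat end, however, the large coordinate spheres $\{r=\mathrm{const}\}$ have mean curvature $\sim 2/r>0$ and hence outgoing expansion $\theta_+>0$, providing an outer untrapped foliation that serves as a barrier near infinity. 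A weakly outer trapped inner boundary cannot be screened from such a barrier: by the existence theory for marginally outer trapped surfaces, the region between $\surf_i$ and the barrier would then have to contain a stable MOTS, which on the totally geodesic slice $\slice$ is a closed minimal surface in the regular exterior region -- impossible for the inner boundary of an asymptotically flat end. This contradiction forces $\theta_+>0$, that is $H_i>0$, on every component, and in particular excludes the flat-torus alternative of Proposition \ref{prop}.

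The crux, and the only genuinely non-routine step, is the global comparison above: converting the heuristic ``focusing versus an expanding barrier'' into a rigorous contradiction, including the borderline case $\theta_+=0$, which is exactly where the maximum principle for marginally outer trapped surfaces is needed. This is precisely the input taken from Galloway--Miao \cite[Theorem~3.1]{GalMiao} and reproduced in the vacuum setting in \cite[Lemma~2.6]{cedergal}; the only modification needed to pass from vacuum to electro-vacuum is that the focusing step uses nothing beyond the NEC, which the electromagnetic energy--momentum tensor satisfies.
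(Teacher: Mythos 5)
Your proposal is correct and takes essentially the same route as the paper: the paper offers no independent proof of this lemma, but, exactly as you do, rests it on the vacuum-case result of \cite[Lemma 2.6]{cedergal} and Galloway--Miao \cite[Theorem 3.1]{GalMiao}, the sole new ingredient being the observation that the argument uses nothing beyond the null energy condition, which the electromagnetic energy-momentum tensor satisfies. Your expository reconstruction of what sits behind those citations---$\theta_{\pm}=\pm H_{i}$ because the static slice $\slice$ is totally geodesic, followed by the focusing-versus-untrapped-barrier contradiction (whose rigorous form, including the borderline case, is precisely the content of the cited theorem)---matches how those references proceed, so nothing further is required.
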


\begin{remark}
Lemma \ref{untrapped} rules out the torus case in Proposition \ref{prop} and thus ensures that each photon sphere component is diffeomorphic to a cylinder over a sphere. Moreover, it ensures via \eqref{eq:prop} that not only $H_{i}>0$ but also $\nu(N)_{i}>0$ on each component $\surf_{i}$. We can thus express the scalar curvature $\my{\sigma}{\Scal}_i$ in terms of the area radius
\begin{align}\label{eq:arearadius}
r_{i}&:=\sqrt{\frac{\vert\surf_{i}\vert_{\sigma_{i}}}{4\pi}}
\end{align}
and rewrite \eqref{eq:scalprop} as
\begin{align}\label{eq:constraint}
\frac{4}{3}&=r_{i}^{2} H_{i}^{2}+\frac{4}{3}\left(\frac{r_i (\nu(\Phi)_{i})}{N_{i}}\right)^{2}.
\end{align}
Moreover, Equation \eqref{eq:prop} simplifies to
 \begin{align}\label{eq:prop'}
N_iH_i&=2\nu(N)_i.
\end{align}
\end{remark}

\begin{remark}\label{ends}
In our definition of an electrostatic system being asymptotic to Reissner-Nordstr\"om, we have assumed just one asymptotic end.  If, however, we had allowed several asymptotic ends in the definition, the argument used to prove Lemma \ref{untrapped}, together with the assumed metric decay \eqref{AF},
could  be used to prove that there can be only one asymptotic end.  Applying this to the universal cover further implies that $\slice$ must be simply connected.\end{remark}

Before we proceed to prove Theorem \ref{thm:main}, we still need to define what we mean by a sub-extremal photon sphere. We will give an ad hoc definition here and provide some justification for this definition in Appendix \ref{app:sub-extremal}.

\begin{definition}\label{def:sub-extremal}
Let $\ES$ be an electrostatic system possessing a photon sphere $(\photo,p)\hookrightarrow\spacetime$. We call $\photo=\cup_{i=1}^{I}\photo_{i}$ \emph{sub-extremal} (or \emph{extremal} or \emph{super-extremal}) if the mean curvature $H_{i}$ of the components $\surf_{i}\hookrightarrow(\slice,g)$ and the area radius $r_{i}$ satisfy
\begin{align}\label{eq:subextremal}
H_{i}r_{i}&>1 \quad\text{ (or }H_{i}r_{i}=1\text{ or } H_{i}r_{i}<1)
\end{align}
for all $i=1,\dots,I$.
\end{definition}

\section{Proof of the main theorem}\label{sec:proof}
In this section we prove the `electrostatic photon sphere uniqueness theorem':
\begin{theorem}\label{thm:main}
Let $\ES$ be an electrostatic system which is electro-vacuum, asymptotic to Reissner-Nordstr\"om, and possesses a (possibly disconnected) sub-extre\-mal photon sphere $(\photo,p)\hookrightarrow\spacetime$, arising as the inner boundary of $\spacet$. Let $M$ denote the ADM-mass and $Q$ the total charge of $\spacetime$. Then $M>0$, $Q^{2}<M^{2}$, and $\spacetime$ is isometric to the region $\lbrace r\geq \frac{3M}{2}+\frac{1}{2}\sqrt{9M^{2}-8Q^{2}}\rbrace$ exterior to the photon sphere $\lbrace r=\frac{3M}{2}+\frac{1}{2}\sqrt{9M^{2}-8Q^{2}}\rbrace$ in the \RN spacetime of mass~$M$ and charge $Q$. In particular, $(\photo,p)$ is connected and a cylinder over a topological sphere.
\end{theorem}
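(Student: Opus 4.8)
The plan is to adapt the Masood-ul-Alam conformal gluing technique, exactly as suggested by the references to \cite{MuA} and \cite{cedergal}. The strategy is to conformally deform the electrostatic manifold $(\slice,g)$ so that it becomes scalar-flat, has a metrically flat asymptotically Euclidean end (zero ADM-mass), and can be smoothly closed off across the photon sphere by doubling, after which the rigidity case of the positive mass theorem forces the whole configuration to be the \RN exterior. First I would, motivated by the explicit \RN solution, seek a conformal factor of the form $u_\pm = \tfrac{1}{2}(1\pm N+\text{(charge correction)})$ so that the two conformal metrics $g_\pm := u_\pm^4\,g$ both have nonnegative scalar curvature. Using the electro-vacuum equations \eqref{SMEelectrovac2} and \eqref{eq:scal} together with the standard conformal transformation law for scalar curvature, the scalar curvature of $g_\pm$ should be arranged to vanish identically; this is where the precise \RN-adapted ansatz for $u_\pm$ in terms of $N$ and $\Phi$ is essential, and one must check that $u_\pm>0$ throughout $\slice$ so that the conformal change is nondegenerate.

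Next I would analyze the two ends. On the original asymptotically \RN end, the decay \eqref{AF}--\eqref{PhiAF} shows that $g_+$ remains asymptotically flat with a computable ADM-mass, while the choice $u_-$ compactifies the original end to a point, producing a second asymptotically flat end for $g_-$ whose mass I would read off from the behaviour of $u_-$ near where $N\to$ its asymptotic value. The key computation is to glue $(\slice,g_+)$ and $(\slice,g_-)$ along the photon sphere boundary $\surf$. Here I would verify that, component by component using Proposition \ref{prop} and in particular the constraint \eqref{eq:constraint} and the relation \eqref{eq:prop'}, the induced metrics and the mean curvatures of $\surf_i$ match (up to the orientation reversal inherent in doubling) so that the glued manifold $\hat{M} := (\slice,g_+)\cup_\surf(\slice,g_-)$ is at least Lipschitz across $\surf$ with the mean-curvature jump having the correct (nonpositive, distributionally nonnegative-mass-contributing) sign. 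This is precisely the point at which the \emph{sub-extremality} hypothesis $H_i r_i>1$ from Definition \ref{def:sub-extremal} enters: it guarantees that the gluing does not create negative-mass distributional curvature at the seam, i.e. that the matching of the two mean curvatures goes the right way.

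I would then invoke the rigidity statement of the Riemannian positive mass theorem in the low-regularity formulation of \cite{Bartnik,LeeLefloch}: the doubled manifold $\hat M$ is asymptotically flat with nonnegative (distributional) scalar curvature, and because the two conformal factors have been normalized so that the total ADM-mass of $\hat M$ vanishes, the rigidity case forces $(\hat M,\hat g)$ to be isometric to flat $\R^3$ (or, after dividing by the conformal factor, to a model space). Unwinding the conformal factors and the doubling then pins down $(\slice,g,N,\Phi)$ to be exactly the \RN exterior; this simultaneously yields $M>0$ and the sub-extremality $Q^2<M^2$ of the mass and charge, forces $\surf$ to be connected (a single end cannot split in flat space), and identifies the geometry with $\{r\ge \tfrac{3M}{2}+\tfrac{1}{2}\sqrt{9M^2-8Q^2}\}$, giving the stated cylinder-over-a-sphere topology via Lemma \ref{untrapped}.

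The main obstacle I anticipate is constructing the conformal factors $u_\pm$ so that \emph{both} conformal metrics are simultaneously scalar-flat and asymptotically flat with the masses adding up to zero, while keeping track of the electric potential: unlike the pure vacuum case in \cite{cedergal}, the source term $\tfrac{2|d\Phi|^2}{N^2}$ in \eqref{eq:scal} must be absorbed, which typically requires $u_\pm$ to depend on both $N$ and $\Phi$ (this is the genuinely electro-vacuum part of the argument taken from \cite{MuA}). The secondary obstacle is the gluing estimate at $\surf$: one must show the mean-curvature inequality across the seam is saturated only in the rigid case, and that \eqref{eq:constraint} together with sub-extremality indeed forces the correct sign. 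Verifying smoothness (not merely Lipschitz regularity) of the glued metric after the rigidity conclusion, so that the final isometry is genuinely smooth up to and across the photon sphere, is the last technical point to settle.
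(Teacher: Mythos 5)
Your overall skeleton (conformal deformation, doubling, positive-mass-theorem rigidity, then unwinding to \RN) is the right family of ideas, but the central step of your proposal---gluing $(\slice,u_+^4\,g)$ to $(\slice,u_-^4\,g)$ directly along the photon sphere $\surf$---breaks down, and the idea needed to fix it is exactly what the paper's proof is built around. A photon sphere is not totally geodesic: by Lemma \ref{untrapped} each component has $H_i>0$, and the lapse satisfies $N_i>0$ there. Consequently any Masood-ul-Alam-type pair of conformal factors built from $1\pm N$ (with or without a charge correction) takes \emph{different} values on $\surf$; for the factors $\frac{1}{4}\left((1\pm N)^2-\Phi^2\right)$ of \cite{MuA} the difference on $\surf_i$ is exactly $N_i>0$. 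Hence the induced metrics of your two conformal copies on the seam do not agree, and the glued space does not even carry a continuous metric, so the low-regularity positive mass theorem cannot be invoked. Trying to repair this with per-component constant rescalings is overdetermined: the normalizations are already spent on making one end asymptotically flat with zero mass and compactifying the other, and one must still match mean curvatures. Moreover, your appeal to sub-extremality here is misplaced: doubling across a seam whose mean curvature is positive toward the retained asymptotic end produces a \emph{negative} distributional scalar curvature concentrated on the seam (a negative-energy shell), and the condition $H_ir_i>1$ does nothing to reverse that sign. In the paper, sub-extremality is used for two entirely different purposes: to guarantee $\mu_i^2>q_i^2$ for the data defined in \eqref{def:qi}, \eqref{def:mui}, and to prove positivity of the conformal factor via a maximum principle applied to \eqref{eq:elliptic}---a genuine argument using \eqref{SMEelectrovac1}, \eqref{SMEelectrovac2} and the boundary computation \eqref{eq:sign}, \eqref{eq:sign2}, which your proposal treats as a routine side check.

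The missing idea is an intermediate gluing step performed \emph{before} any doubling or conformal change: at each $\surf_i$ the paper isometrically attaches the exact \RN neck of mass $\mu_i$ and charge $q_i$ (the portion of the spatial \RN manifold between its photon sphere and its horizon), with $q_i$, $\mu_i$ chosen by \eqref{def:qi}, \eqref{def:mui} so that the metric, lapse, and potential match to $C^{1,1}$ across $\surf_i$; this matching uses the umbilicity and constancy statements of Proposition \ref{prop} together with \eqref{eq:constraintqi} and \eqref{eq:prop'}. Sub-extremality guarantees that these necks terminate in non-degenerate, \emph{totally geodesic} horizons, and the doubling is carried out across those horizons, where it is smooth and creates no distributional curvature. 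Only then is a single conformal factor $\Omega=\frac{1}{4}\bigl((1+\widetilde{N})^2-\widetilde{\Phi}^2\bigr)$ applied to the doubled manifold, with $\widetilde{N}$ extended oddly and $\widetilde{\Phi}$ evenly across the horizon, after which Bartnik's rigidity theorem and the conformal-flatness characterization of \RN finish the proof much as you describe. Without the neck insertion, the doubling step at the heart of your argument fails, so the proposal has a genuine gap at its core.
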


\begin{proof}[Proof of Theorem \ref{thm:main}]
The main idea of our proof is as follows: In Step 1, we will define a new electro-vacuum electrostatic system $(\widetilde{M}^3,\widetilde{g},\widetilde{N},\widetilde{\Phi})$ which is asymptotic to \RN and has (Killing) horizon boundary by gluing in some carefully chosen pieces of (spatial) \RN manifolds of appropriately chosen masses and charges. More precisely, at each photon sphere base $\surf_{i}$, we will glue in a ``neck'' piece of a \RN manifold of mass $\mu_{i}>0$ and charge $q_{i}$, namely the cylindrical piece between its photon sphere and its horizon. This creates a new horizon boundary corresponding to each $\surf_{i}$. The manifold $\widetilde{M}^{3}$ itself is smooth while the metric $\widetilde{g}$, the lapse function $\widetilde{N}$, and the electric potential $\widetilde{\Phi}$ are smooth away from the gluing surfaces, and, as will be shown, $C^{1,1}$ across them. Also, away from the gluing surfaces, $(\widetilde{M}^3,\widetilde{g}\,)$ has non-negative scalar curvature.

Then, in a very short Step 2, we double the glued manifold over its minimal boundary\footnote{In case the original manifold $M^3$ had additional non-degenerate Killing horizon boundary components, $\widetilde{M}^3$ has additional boundary components with $N=0$ and thus $H=0$, $\free{h}=0$, $\my{\sigma\!}{R}\equiv\text{const.}$ The minimal boundary $\mathfrak{B}$ constructed here is thus of the same geometry and regularity as the spatial slices of the event horizon and the doubling can be carried out for both at once.} $\mathfrak{B}$ and assert that the resulting electrostatic system---which will also be called $(\widetilde{M}^3,\widetilde{g},\widetilde{N},\widetilde{\Phi})$---is in fact smooth across $\mathfrak{B}$. The resulting manifold has two isometric asymptotic ends, non-negative scalar curvature, and is geodesically complete.

In Step 3, along the lines of \cite{MuA}, we will conformally modify $(\widetilde{M}^{3},\widetilde{g}\,)$ into another geodesically complete, asymptotically flat Riemannian manifold $(\widehat{M}^{3}=\widetilde{M}^{3}\cup\lbrace \infty\rbrace,\widehat{g}\,)$.  By our choice of conformal factor, $(\widehat{M}^{3},\widehat{g}\,)$ is smooth and has non-negative scalar curvature away from the gluing surfaces and the point $\infty$, and suitably regular across them. The new manifold $(\widehat{M}^{3},\widehat{g}\,)$ will have precisely one end of vanishing ADM-mass.

In Step 4, by applying the rigidity statement of the positive mass theorem with suitably low regularity, we find that $(\widehat{M}^{3},\widehat{g})$ must be isometric to Euclidean space $(\R^{3},\delta)$. Thus, the original electrostatic system $\ES$ was conformally flat. It follows as in \cite{MuA} that it is indeed isometric to the exterior region $\lbrace r\geq \frac{3M}{2}+\frac{1}{2}\sqrt{9M^{2}-8Q^{2}}\rbrace$ of the photon sphere in the (spatial) \RN manifold of mass $M>0$ and charge $Q$ satisfying $Q^{2}>M^{2}$. This will complete our proof.

\subsection*{Step 1: Constructing an asymptotically flat manifold with minimal boundary and non-negative scalar curvature}
First, we recall that every connected component $\surf_{i}$ must be a topological sphere by Proposition \ref{prop}, Lemma \ref{untrapped}. Now fix $i\in\lbrace{1,\dots,I\rbrace}$. We define the (Komar style) \emph{charge}
\begin{align}\label{def:qi}
q_{i}&:=-\frac{1}{4\pi}\int_{\surf_{i}}\frac{\nu(\Phi)}{N}\,dA=-\frac{\vert\surf_{i}\vert_{\sigma_{i}}}{4\pi}\frac{\nu(\Phi)_{i}}{N_{i}}=-\frac{\nu(\Phi)_{i}r_{i}^{2}}{N_{i}}
\end{align}
which behaves like charge in non-relativistic electrostatics in view of \eqref{SMEelectrovac1}. This allows to rewrite \eqref{eq:constraint} as
\begin{align}\label{eq:constraintqi}
 \frac{4}{3}&=r_{i}^{2} H_{i}^{2}+\frac{4}{3}\left(\frac{q_i}{r_{i}}\right)^{2}.
\end{align}

Moreover, we define the \emph{mass} $\mu_{i}$ of $\surf_i$ by
\begin{align}\label{def:mui}
 \mu_i&:=\frac{r_i}{3}+\frac{2q_{i}^{2}}{3r_{i}}
 \end{align}
 and observe that $\mu_{i}>0$ and $\mu_{i}^{2}-q_{i}^{2}=
 (r_{i}^{2}-q_i^2)(r_i^2 - 4q_{i}^{2})/9r_{i}^{2}>0$, 
since \eqref{eq:constraintqi} and the sub-extremality condition, $r_{i}H_{i}>1$, imply $r_{i}^{2}>4q_{i}^{2}$. This allows us to define the interval
 \begin{align}\label{def:Ii}
  I_i:=\left[s_{i}:=\mu_i+\sqrt{\mu_{i}^{2}-q_{i}^{2}},r_i=\frac{3\mu_{i}}{2}+\frac{1}{2}\sqrt{9\mu_{i}^{2}-8q_{i}^{2}}\right]\subset\R.
\end{align}

To each boundary component $\surf_i$ of $\slice$, we now  glue in a cylinder of the form $I_i\times\surf_i$. We do this in such a way that the original photon sphere component $\surf_i\subset\slice$ corresponds to the level $\lbrace{r_i\rbrace}\times\surf_i$ of the cylinder $I_i\times\surf_i$ and will  continue to call this gluing surface $\surf_i$. The resulting manifold $\widetilde{M}^{3}$ has inner boundary
\begin{align}\label{boundary}
\mathfrak{B}&:=\bigcup_{i=1}^{I}\,\lbrace s_{i}\rbrace\times\surf_{i}.
\end{align}

In the following, we construct an electrostatic system $(\widetilde{M}^{3},\widetilde{g},\widetilde{N},\widetilde{\Phi})$ which is smooth and electro-vacuum away from the gluing surface $\surf$ and geodesically complete up to the boundary $\mathfrak{B}$. On $I_i\times\surf_i$, we first set
\begin{align}\label{def:g}
 \gamma_i&:=\frac{1}{f_i(r)^{2}}dr^2+\frac{r^{2}}{r_i^{2}}\sigma_i=\frac{1}{f_i(r)^{2}}dr^2+r^{2}\,\Omega,\\\label{def:fi}
 f_i(r)&:=\sqrt{1-\frac{2\mu_i}{r}+\frac{q_{i}^{2}}{r^{2}}},\\\label{def:phii}
 \varphi_{i}(r)&:=\frac{q_{i}}{r},
\end{align}
where $r\in I_i$ denotes the coordinate along the cylinder $I_i\times\surf_i$ and we have used that $\sigma_i=r_{i}^{2}\,\Omega$  by Proposition \ref{prop}, with $\Omega$ the canonical metric on $\mathbb{S}^{2}$. This is of course a portion of the spatial \RN system $(I_{i}\times\surf_{i},\gamma_i,f_{i},\varphi_{i})$ of mass $\mu_i>0$ and charge $q_{i}$ satisfying the sub-extremality condition $q_{i}^{2}<\mu_{i}^{2}$, namely the portion from the minimal surface to the photon sphere. It thus satisfies the electro-vacuum equations \eqref{SMEelectrovac1}-\eqref{SMEelectrovac3}. As \eqref{SMEelectrovac1}-\eqref{SMEelectrovac3} are invariant under $N\to \alpha N$, $\Phi\to\alpha\Phi+\beta$ for any constants $\alpha\neq0$, $\beta\in\R$, we can freely choose $\alpha_{i}\neq0$ and $\beta_{i}\in\R$ and obtain a new electrostatic electro-vacuum system $(I_{i}\times\surf_{i},\gamma_i,\alpha_{i}f_{i},\alpha_{i}\varphi_{i}+\beta_{i})$. 

It will be convenient to choose
\begin{align}\label{def:alphai}
\alpha_{i}&:=\frac{N_i}{f_i(r_{i})}>0,\\\label{def:betai}
\beta_{i}&:=\Phi_{i}-\alpha_{i}\frac{q_{i}}{r_{i}}.
\end{align}

Now combine $\ES$ with $(I_{i}\times\surf_{i},\gamma_i,\alpha_{i}f_{i},\alpha_{i}\varphi_{i}+\beta_{i})$ to obtain $(\widetilde{M}^{3},\widetilde{g},\widetilde{N},\widetilde{\Phi})$ by setting $\widetilde{g}:=g$ on $M^{3}$, $\widetilde{g}:=\gamma_i$ on $I_{i}\times\surf_{i}$, and
\begin{align}\label{def:tildeN}
\widetilde{N}:\widetilde{M}^3\to\R^{+}: p\mapsto
   \begin{cases}
     N(p) & \text{if } p\in \slice,\\[0.25cm]
     \alpha_{i}f_{i}(r(p))\phantom{+\beta_{i}ll} & \text{if } p\in I_i\times\surf_i,
         \end{cases}\\\label{def:tildePhi}
\widetilde{\Phi}:\widetilde{M}^3\to\R: p\mapsto
   \begin{cases}
     \Phi(p) & \text{if } p\in \slice,\\[0.25cm]
     \alpha_{i}\varphi_{i}(r(p))+\beta_{i} & \text{if } p\in I_i\times\surf_i,
         \end{cases}         
\end{align}
The metric $\widetilde{g}$ as well as the lapse $\widetilde{N}$ and the electric potential $\widetilde{\Phi}$ are naturally smooth away from the gluing surfaces $\surf_i$. The manifold $(\widetilde{M}^{3},\widetilde{g}\,)$ is geodesically complete up to the minimal boundary $\mathfrak{B}$ as $(\slice,g)$ was assumed to be geodesically complete up to its inner boundary. Moreover, it has non-negative scalar curvature away from the gluing surfaces as both $(\slice,g)$ and the rescaled \RN satisfy \eqref{SMEelectrovac1}-\eqref{SMEelectrovac3}. In a moment we will show that $\widetilde g$, $\widetilde{N}$, and $\widetilde{\Phi}$ are well defined and $C^{1,1}$ across $\surf$.\\

\noindent
\emph{It remains to show that $(\widetilde{M}^{3},\widetilde{g},\widetilde{N},\widetilde{\Phi})$ is $C^{1,1}$ across all gluing surfaces.}  To show this, we intend to use 
\begin{align}\label{def:psi}
\psi&:=\widetilde{N}:\widetilde{M}^3\to\R^+
\end{align}
as a smooth collar function across the gluing surfaces. Let us now fix $i\in\lbrace{1,\dots,I\rbrace}$.

Let us first show that $\psi$ is indeed well-defined and can be used as a smooth coordinate in a neighborhood of the gluing surface $\surf_{i}$: First, by construction, $\psi$ is smooth away from $\surf_{i}$. Then, by our choice of constant factor\footnote{In fact, this constant factor $\alpha_{i}$ can be expressed as $\alpha_{i}=\sqrt{3}m_{i}r_{i}/\sqrt{(r_{i}^{2}-q_{i}^{2})(r_{i}^{2}-2\mu_{i}r_{i}+q_{i}^{2}})$ with $m_i$ the Komar mass of $\Sigma_i$, $m_i=\frac{1}{4\pi}\int_{\surf_i}\nu(N)\,dA=\frac{\lvert\surf_i\rvert_{\sigma_i}}{4\pi}\nu(N)_{i}=r_{i}^{2}\,\nu(N)_{i}$. Note that $m_{i}>0$ by \eqref{eq:prop'} and Lemma \ref{untrapped}. If $q_i=0$, this collapses to $\alpha_{i}=3m_{i}/r_{i}$ which coincides with the multiplicative factor chosen in \cite{cedergal}.} $\alpha_i$ in front of $f_i$ (which equals $1$ in case $(\slice,g)$ already is a \RN manifold), $\psi$ has the same constant value on each side of $\surf_i$, and hence is well-defined and continuous across $\surf_i$.

The outward unit normal vector to $\surf_i$ with respect to the \RN side is given by $\widetilde{\nu}=f_{i}(r_{i})\,\partial_{r}$.  In $\slice$, the outward unit normal is given by $\widetilde{\nu}=\nu$. By our choice of $\mu_i$ and $q_{i}$, one verifies, using Proposition \ref{prop}, Lemma \ref{untrapped}, \eqref{def:qi}, and \eqref{def:mui}, that the normal derivative of $\psi$ is the same {\it positive} constant on both sides of $\surf_i$. This, in particular, allows us to use $\psi$ as a smooth coordinate function in a (collared) neighborhood of each $\surf_i$ in $\widetilde{M}^{3}$. As $\widetilde{\nu}(\psi)\vert_{\surf_i}$ coincides from both sides, the normal $\widetilde{\nu}$ is in fact continuous and thus by smoothness even $C^{0,1}$ across $\surf_i$. Moreover, this shows that the lapse function $\widetilde{N}=\psi$ is $C^{1,1}$ across $\surf_i$.

By choice of $\beta_{i}$, $\widetilde{\Phi}$ is also continuous across $\surf_{i}$. Its normal derivative on the \RN side satisfies $\widetilde{\nu}(\widetilde{\Phi})=\alpha_{i}\widetilde{\nu}(\varphi_{i})=\alpha_{i}(-q_{i}f_{i}/r_{i}^{2})=-q_{i}\psi/r_{i}^{2}$, where we have used the explicit form of Reissner-Nordstr\"om. By definition of $q_{i}$, \eqref{def:qi}, the same identity $\widetilde{\nu}(\widetilde{\Phi})=-q_{i}N_{i}/r_{i}^{2}=-q_{i}\psi/r_{i}^{2}$ also holds on the original side of $\surf_{i}$. As $\widetilde{\Phi}$ is smooth in a deleted neighborhood of $\surf_{i}$, this shows that $\widetilde{\Phi}$ is indeed $C^{1,1}$ across $\surf_{i}$.

To show that $\widetilde{g}$ is $C^{1,1}$ across $\surf_i$, let $(y^{A})$ be local coordinates on $\surf_i$ and flow them to a neighborhood of $\surf_i$ in $\widetilde{M}^{3}$ along the level set flow defined by $\psi$. It then suffices to show that the components $\widetilde{g}_{AB}$, $\widetilde{g}_{A\psi}$, and $\widetilde{g}_{\psi\psi}$ are $C^{1,1}$ with respect to the coordinates $(y^{A},\psi)$ across the $\psi$-level set $\surf_i$ for all $A,B=1,2$. 

Continuity of $\widetilde g$ in the coordinates $(y^{A},\psi)$ and smoothness in tangential directions along $\surf_i$ is then immediate as $\partial_{\psi}=\frac{1}{\widetilde{\nu}(\psi)}\,\widetilde{\nu}$, and thus
\begin{align}\label{continuous}
\widetilde{g}_{AB}&=r_{i}^{2}\,\Omega_{AB},\quad \widetilde{g}_{A\psi}=0, \quad \widetilde{g}_{\psi\psi}=\frac{1}{(\widetilde{\nu}(\psi))^{2}}
\end{align}
on $\surf_{i}$ for all $A,B=1,2$ and from both sides. Further, we find that
\begin{align*}
\partial_{\psi}\left(\widetilde{g}_{AB}\right)&=\frac{2}{\widetilde{\nu}(\psi)}\,\widetilde{h}_{AB}
\end{align*}
holds on $\surf_{i}$, where $\widetilde{h}_{AB}$ is the second fundamental form induced on $\surf_i$ by $\widetilde{g}$. 
Proposition \ref{prop} ensures the umbilicity of every component of any photon sphere. Also, it asserts that the mean curvature of every component of any photon sphere is determined by its area radius $r_{i}$ and charge $q_{i}$ via \eqref{eq:constraintqi}, up to a sign. Hence, by using \eqref{continuous}, we observe that 
$\widetilde{h}=\pm\frac{1}{2}H_{i}\sigma_{i}=\pm\frac{1}{2}H_{i}\,r_{i}^{2}\,\Omega$ holds on both sides of the photon sphere gluing boundary component $\surf_i$. We still need to determine this a priori free sign: From the side of $\slice$, we know from Lemma \ref{untrapped} that $H_{i}>0$, where $H_{i}$ is computed with respect to the unit normal pointing towards the asymptotic end. On the \RN side, the mean curvature of the photon sphere with respect to the unit normal $\widetilde{\nu}$ pointing towards infinity and thus into $\slice$, is also positive. Thus, in both cases $\widetilde{h}_{AB}$ and thus also $\partial_{\psi}\left(\widetilde{g}_{AB}\right)$ coincide from both sides of $\surf_i$ for all $A,B=1,2$.

By construction, $\widetilde{g}_{A\psi}=0$ not only on $\surf_i$ but also in a neighborhood of $\surf_i$ inside $\widetilde{M}^{3}$ so that $\partial_{\psi}(\widetilde{g}_{A\psi})=0$ on both sides of $\surf_i$ for $A=1,2$. It remains to be shown that $\partial_{\psi}(\widetilde{g}_{\psi\psi})$ coincides from both sides. Then, because $\widetilde{g}$ is smooth on both sides up to the boundary (by assumption), we will have proved that $\widetilde{g}$ is $C^{1,1}$ everywhere.

A direct computation using the level set flow equations for $\psi$ shows that 
\begin{align}
\partial_{\psi}(\widetilde{g}_{\psi\psi})&=-2\,(\widetilde{\nu}(\psi))^{6}\,\,\my{\widetilde{g}}{\nabla}^{2}\psi(\widetilde{\nu},\widetilde{\nu})
\end{align}
from both sides of $\surf_i$, where $\my{\widetilde{g}}{\nabla}^{2}\psi$ denotes the Hessian of $\psi$ with respect to $\widetilde{g}$. We already know that $\widetilde{\nu}(\psi)$ is continuous across $\surf_i$. From the general identity for the splitting of the Laplacian on hypersurfaces, we know that
\begin{align}
\my{\widetilde{g}}{\nabla}^{2}\psi(\widetilde{\nu},\widetilde{\nu})\stackrel{\eqref{def:psi}}{=}\my{\widetilde{g}}{\nabla}^{2}\widetilde{N}(\widetilde{\nu},\widetilde{\nu})=\mylap{\widetilde{g}}\widetilde{N}-\mylap{\widetilde{\sigma_{i}}}\widetilde{N}-\widetilde{H}_{i}\,\widetilde{\nu}(\widetilde{N}),
\end{align}
where $\widetilde{H}_{i}$ denotes the mean curvature induced by $\widetilde{g}$ with respect to $\widetilde{\nu}$, and $\mylap{\widetilde{g}}$ and $\mylap{\widetilde{\sigma_{i}}}$ denote the $3$- and $2$-dimensional Laplacians induced by $\widetilde{g}$ and $\widetilde{\sigma_{i}}:=\widetilde{g}\vert_{T\surf_{i}\times T\surf_{i}}$, respectively. Using that $\psi$ is constant along $\surf_i$ and \eqref{SMEelectrovac2}, it follows that
\begin{align}\label{Hessian}
\my{\widetilde{g}}{\nabla}^{2}\psi(\widetilde{\nu},\widetilde{\nu})&=\cancelto{\frac{\vert d\widetilde{\Phi}\vert^{2}_{\widetilde{\sigma}_{i}}}{\widetilde{N}}}{\mylap{\widetilde{g}}\widetilde{N}}\;\;-\;\;\cancelto{0}{\mylap{\widetilde{\sigma_{i}\!}}\widetilde{N}}\;-\;\widetilde{H}_{i}\,\widetilde{\nu}(\widetilde{N})=\frac{\widetilde{\nu}(\widetilde{\Phi})^{2}}{\widetilde{N}}-\widetilde{H}_{i}\,\widetilde{\nu}(\widetilde{N}),
\end{align}
on both sides of $\surf_i$. To conclude, we recall that $\widetilde{\nu}(\widetilde{\Phi})$, $\widetilde{N}$, $\widetilde{H}_{i}$, and $\widetilde{\nu}(\widetilde{N})$ are continuous across $\surf_i$ so that $\my{\widetilde{g}}{\nabla}^{2}\psi(\widetilde{\nu},\widetilde{\nu})$ and thus $\partial_{\psi}(\widetilde{g}_{\psi\psi})$ are continuous across $\surf_i$. Thus, $\widetilde{g}$ is $C^{1,1}$ across $\surf_i$. As $i\in\lbrace1,\dots,I\rbrace$ was arbitrary, $(\widetilde{M}^{3},\widetilde{g},\widetilde{N},\widetilde{\Phi})$ is indeed $C^{1,1}$. 

\subsection*{Step 2: Doubling}
Now, we rename $\widetilde{M}^{3}$ to $\widetilde{M}^{+}$, reflect $\widetilde{M}^{+}$ through $\mathfrak{B}$ to obtain $\widetilde{M}^{-}$, and  glue the two copies to each other along their shared minimal boundary~$\mathfrak{B}$. We thus obtain a new smooth manifold which we will call $\widetilde{M}^3$ by a slight abuse of notation. We define a metric on $\widetilde{M}^{3}$ by equipping both $\widetilde{M}^{\pm}$ with the metric $\widetilde{g}$ constructed in Step 1. Respecting the symmetries of the electro-vacuum equations \eqref{SMEelectrovac1}-\eqref{SMEelectrovac3}, we extend the lapse function $\widetilde{N}^{+}:=\widetilde{N}$ and electric potential $\widetilde{\Phi}^{+}:=\widetilde{\Phi}$ constructed in Step 1 on $\widetilde{M}^{+}$ across $\mathfrak{B}$ by choosing $\widetilde{N}^{-}:=-\widetilde{N}^{+}$ (odd) and $\widetilde{\Phi}^{-}:=\widetilde{\Phi}$ (even). Combined, we will call the extended functions $\widetilde{N}:=\pm\widetilde{N}^{+}$, $\widetilde{\Phi}:=\widetilde{\Phi}^{\pm}$ on $\
widetilde{M}^{\pm}$.\\

\noindent \emph{Why $(\widetilde{M}^{3},\widetilde{g},\widetilde{N},\widetilde{\Phi})$ is smooth across $\mathfrak{B}$ and $\widetilde{\psi}:=\widetilde{N}$ can be used as a smooth collar coordinate function near $\mathfrak{B}$.}  In contrast to \cite{MuA}, this is in fact almost immediate because we are just gluing two \RN necks of the same mass $\mu_{i}$ and charge $q_{i}$ to each other across their minimal surface boundaries on each component $\lbrace s_{i}\rbrace\times\surf_{i}$ of $\mathfrak{B}$---up to a constant/affine rescaling of the lapse functions and electric potentials, see  \eqref{def:tildeN}, \eqref{def:tildePhi}. This rescaling respects the odd symmetry of $f_{i}$ and the even symmetry of $\varphi_{i})$ in \RN and thus does not cause any additional problems. Indeed, $\widetilde{\psi}$ and $\widetilde{\Phi}$ are smooth across the horizon boundary $\mathfrak{B}$ as can be seen in isotropic coordinates. Smoothness of the metric $\widetilde{g}$ across $\mathfrak{B}$ then follows directly.\\

By construction, the doubled electrostatic system $(\widetilde{M}^{3},\widetilde{g},\widetilde{N},\widetilde{\Phi})$ has two isometric ends which are asymptotic to \RN of mass $M$ and charge $Q$. It is geodesically complete as $(\slice,g)$ was assumed to be geodesically complete up to the boundary. Finally, we observe that it satisfies the electro-vacuum equations away from $(\surf_i)^{\pm}$ by construction and that it is smooth away from and $C^{1,1}$ across~$(\surf_i)^{\pm}$.

\subsection*{Step 3: Conformal transformation to a geodesically complete manifold with vanishing ADM-mass and non-negative scalar curvature}
As in Masood-ul-Alam \cite{MuA}, we want to use 
\begin{align}\label{def:Omega}
{\Omega}&:=\frac{1}{4}\left(\left(1+\widetilde{N}\right)^{2}-\widetilde{\Phi}^{2}\right)
\end{align}
as a conformal factor on $\widetilde{M}^{3}$. However, in our situation it is not a priori evident that~${\Omega}>0$.\\

\paragraph*{\emph{Why ${\Omega}>0$ holds on $\widetilde{M}^{3}$}.}
By the odd-even-symmetric definition of $\widetilde{N}^{\pm}=\pm\widetilde{N}$ and $\widetilde{\Phi}^{\pm}=\widetilde{\Phi}$ in Step 2, it suffices to show that $(1\pm\widetilde{N})^{2}>\widetilde{\Phi}^{2}$ in $\widetilde{M}^{+}$. Clearly, as $\widetilde{N}>0$ in $\widetilde{M}^{+}$, we have $(1+\widetilde{N})^{2}>(1-\widetilde{N})^{2}$ so that is suffices to show $(1-\widetilde{N})^{2}>\widetilde{\Phi}^{2}$ in $\widetilde{M}^{+}$. To do so, we re-write $(1-\widetilde{N})^{2}-\widetilde{\Phi}^{2}=(\widetilde{N}-1+\widetilde{\Phi})(\widetilde{N}-1-\widetilde{\Phi})$ in $\widetilde{M}^{+}$. Now, observe that
\begin{align}\label{eq:elliptic}
\mylap{\widetilde{g}}(\widetilde{N}-1\pm\widetilde{\Phi})\mp\frac{d\widetilde{\Phi}(\,\my{\widetilde{g}}\grad(\widetilde{N}-1\pm\widetilde{\Phi}))}{\widetilde{N}}&=0
\end{align}
as a consequence of \eqref{SMEelectrovac1} and \eqref{SMEelectrovac2}. This is an elliptic PDE to which the maximum principle applies as $\widetilde{N}>0$ (see e.\,g.\ \cite{GT}). If we can show that $\widetilde{N}-1\pm\widetilde{\Phi}<0$ on $\widetilde{M}^{+}$, it follows that $\Omega>0$ on $\widetilde{M}^{3}$. 

On $M^{+}=\slice$, $\widetilde{N}-1\pm\widetilde{\Phi}=N-1\pm\Phi$ and we know that $N\to1$ and $\Phi\to0$ as $r\to\infty$ so that $\widetilde{N}-1\pm\widetilde{\Phi}\to0$ as $r\to\infty$. On the other hand, we find that
\begin{align}\label{eq:sign}
\widetilde{\nu}(\widetilde{N}-1-\widetilde{\Phi})\vert_{\surf_{i}}&\;\;=\phantom{\frac{1}{2}}\nu(N)_{i}\,\pm\nu(\Phi)_{i}\stackrel{\eqref{eq:prop'}}{=}\frac{1}{2}H_{i}N_{i}\pm\nu(\Phi)_{i}
\end{align}
and thus $\widetilde{\nu}(\widetilde{N}-1-\widetilde{\Phi})\vert_{\surf_{i}}>0$ if and only if
\begin{align}\label{eq:sign2}
&\left(\frac{1}{2}H_iN_i\right)^2>\left(\nu(\Phi)_i\right)^2\quad\stackrel{\eqref{def:qi}}{\Leftrightarrow}\quad H_i^2r_i^2 >4\frac{q_i^2}{r_i^2} \\\nonumber
\stackrel{\eqref{eq:constraintqi}}{\Leftrightarrow}&\quad \frac{4}{3}\left(1-\frac{q_i^2}{r_i^2}\right)>4\frac{q_i^2}{r_i^2}\quad\quad \Leftrightarrow \quad\frac{q_i^2}{r_i^2}>\frac{1}{4}\\\nonumber
\stackrel{\eqref{eq:constraintqi}}{\Leftrightarrow}&\quad H_i^2r_i^2>1.
\end{align}
Since the latter holds by the sub-extremality condition, the maximum principle implies that 
$\widetilde{N}-1\pm\widetilde{\Phi}<0$ on the original manifold $M^{+}=M^{3}$. 

On each neck $(I_{i}\times\surf_{i})^{+}=(I_{i}\times\surf_{i})$,
$\widetilde{N}=\alpha_{i}f_i$, where $f_{i}$ is the \RN lapse function given by \eqref{def:fi} and $\alpha_{i}>0$. It is easy to compute
\begin{align}
\widetilde{\nu}(\widetilde{N}-1\pm\widetilde{\Phi})&=\widetilde{\nu}(\alpha_{i}f_{i}-1\pm\left(\alpha_{i}\varphi_{i}+\beta_{i})\right)=\alpha_{i}f_{i}\left(f_{i},_{r}\pm\varphi_{i},_{r}\right)>0
\end{align}
explicitly in Reissner-Nordstr\"om. Again by the maximum principle applied to \eqref{eq:elliptic}---or by direct computation in Reissner-Nordstr\"om---it follows that $\widetilde{N}-1\pm\widetilde{\Phi}<0$ everywhere on $\widetilde{M}^{3}$ and thus $\Omega>0$ on $\widetilde{M}^{3}$.\\

The above considerations allow us to define the conformally transformed metric
\begin{align}\label{def:ghat}
\widehat{g}&:=\Omega^4\,\widetilde{g}
\end{align}
on $\widetilde{M}^3$. Away from the gluing surfaces, $\Omega$ is smooth 
as $\widetilde{N}$ and $\widetilde{\Phi}$ are smooth, there. Since, in addition, $\widetilde{g}$ is smooth away from the gluing surfaces, the same holds true for $\widehat{g}$. Following the exposition on p.~153ff in Heusler \cite{Heusler}, we find
\begin{align}
\frac{\Omega^{4}}{2}\,\my{\widehat{g}\!}{R}&=\frac{1}{(4\widetilde{N})^{2}}\left\lvert(1-\widetilde{N}^{2}-\widetilde{\Phi}^{2})\,d\widetilde{\Phi}+2\,\widetilde{\Phi}\,\widetilde{N}\,d\widetilde{N}\right\rvert^{2}_{\widetilde{g}}\geq0,
\end{align}
where $\my{\widehat{g}\!}{R}$ denotes the scalar curvature of $\widetilde{M}^3$ with respect to $\widehat{g}$.

Furthermore, $\Omega$ and $\widehat{g}$ are $C^{1,1}$ across all gluing boundaries $\surf_i$, $i\in\lbrace1,\dots,I\rbrace$, because $\widetilde{N}$, $\widetilde{\Phi}$, and $\widetilde{g}$ are $C^{1,1}$ there (by product rule and \eqref{def:Omega}, \eqref{def:ghat}).

Moreover, precisely as in \cite{MuA}, $(M^+,\widehat{g}\,)$ is asymptotically flat with zero ADM-mass. On the other hand, again precisely as in \cite{MuA}, $(M^-,\widehat{g}\,)$ can be compactified by adding in a point $\infty$ at infinity with $(\widehat{M}^{3}:=\widetilde{M}^3\cup\lbrace \infty\rbrace,\widehat{g}\,)$ sufficiently regular. By construction, $(\widehat{M}^{3},\widehat{g}\,)$ is geodesically complete.

Summarizing, we now have constructed a geodesically complete Riemannian manifold $(\widehat{M}^{3},\widehat{g}\,)$ with non-negative scalar curvature $\my{\widehat{g}\!}{R}\geq0$ and one asymptotically flat end of vanishing ADM mass that is smooth away from some hypersurfaces and one point. {At the point $\infty$, as well as at all gluing surfaces, the regularity is precisely that encountered by \cite{MuA}.}

\subsection*{Step 4: Applying the Positive Mass Theorem.}
In Steps 1-3, we have constructed the geodesically complete Riemannian manifold $(\widehat{M}^{3},\widehat{g}\,)$ of non-negative scalar curvature which has one asymptotically flat end of vanishing ADM mass in a manner similar to what is done in \cite{MuA}. Moreover, as noted above, the regularity achieved is the same as that encountered in \cite{MuA}. Masood-ul-Alam's analysis thus fully applies and asserts that the (weak regularity) Positive Mass Theorem proved by Bartnik \cite{Bartnik} applies.

The rigidity statement of this (weak regularity) Positive Mass Theorem implies that $(\widehat{M}^{3},\widehat{g}\,)$ must be isometric to Euclidean space $(\R^3,\delta)$. This immediately shows that the photon sphere $\photo$ was connected and diffeomorphic to a cylinder over a sphere for topological reasons. Moreover, it allows us to deduce that $(\slice,g)$ must be conformally flat. It is well-known\footnote{and can be verified by a  computation using the Bach tensor, see e.\,g.\ \cite{Heusler} or \cite{MuA}.} that the only conformally flat, maximally extended solution of the electro-vacuum equations \eqref{SMEelectrovac1}-\eqref{SMEelectrovac3} is the \RN solution \eqref{RNmetric}.

In particular, the lapse function $N$ is given by $N=\sqrt{1-2M/r+Q^{2}/r^{2}}$ with $r$ the area radius along the level sets of $N$, and $M$ and $Q$ the mass and charge of $\ES$. Equation \eqref{SMEelectrovac1} then immediately implies $Q=q_{1}$ by the divergence theorem. This, in turn, gives $\mu_{1}=M$ via \eqref{def:mui} so that in particular $M>0$ and $Q^{2}<M^{2}$. This proves the claim of Theorem \eqref{thm:main}.
\end{proof}

\section{The electrostatic $n$-body problem for very compact bodies and black holes}\label{sec:nbody}
The following theorem addresses the ``electrostatic $n$-body problem'' in General Relativity, namely the question whether multiple suitably ``separated'' (potentially) electrically charged bodies and black holes can be in static equilibrium. This problem has received very little attention; most progress has been achieved in the non-electrically charged case, see \cite{cedergal} and references cited therein. The most notable result in the charged realm is Masood-ul-Alam's theorem \cite{MuA} on electrostatic black hole uniqueness. This can be re-interpreted as saying that there are no $n>1$ (potentially) electrically charged non-degenerate black holes in static equilibrium. Our approach can handle extended bodies as well as combinations of bodies and black holes and makes no symmetry assumptions. However, we can only treat ``very compact'' bodies, namely bodies that are each so compact that they give rise to a (sub-extremal) photon sphere behind which they reside:

\begin{theorem}[No static configuration of $n$ ``very compact'' electrically charged bodies and black holes]\label{thm:nbody}
There are no electrostatic equilibrium configurations of $k\in~\!\!\N$ possibly electrically charged bodies and $n\in\N$ possibly electrically charged non-degenerate black holes with $k+n>1$ in which each body is surrounded by its own sub-extremal photon sphere.
\end{theorem}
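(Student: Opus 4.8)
The plan is to deduce Theorem \ref{thm:nbody} from Theorem \ref{thm:main}, in the form extended to allow additional non-degenerate horizon boundaries (Remark \ref{rem:multiple}), together with Masood-ul-Alam's static electro-vacuum black hole uniqueness theorem \cite{MuA}. Suppose, for contradiction, that an electrostatic equilibrium configuration of $k$ bodies and $n$ non-degenerate black holes with $k+n>1$ exists, in which each body is surrounded by its own sub-extremal photon sphere. The key move is to pass to the electro-vacuum region exterior to all matter and horizons. Since each body is \emph{very compact}, i.e.\ resides strictly behind its own photon sphere, the manifold $\slice$ obtained by deleting the interiors of all $k$ photon spheres and all $n$ black holes is a genuine electro-vacuum electrostatic system $\ES$: it has a single asymptotic end (asymptotic to \RN), it solves \eqref{SMEelectrovac1}--\eqref{SMEelectrovac3}, and its inner boundary is the disjoint union of the $k$ sub-extremal photon sphere components $\photo_{1},\dots,\photo_{k}$ together with the $n$ non-degenerate horizon components.

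First I would treat the generic case $k\geq1$, in which a photon sphere inner boundary is present. Here I would invoke Theorem \ref{thm:main}, carrying the $n$ horizon components through Steps~1--4 of its proof exactly as the minimal boundary $\mathfrak{B}$ is carried, following \cite{MuA} and Remark \ref{rem:multiple}. The rigidity conclusion then forces $\spacetime$ to be isometric to the \RN region exterior to its \emph{single, connected} photon sphere $\lbrace r=\frac{3M}{2}+\frac{1}{2}\sqrt{9M^{2}-8Q^{2}}\rbrace$. This region has exactly one boundary component, and its photon sphere lies strictly outside the \RN horizon $\lbrace r=M+\sqrt{M^{2}-Q^{2}}\rbrace$, so the region contains no horizon at all. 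Hence $k=1$ and $n=0$, giving $k+n=1$, which contradicts $k+n>1$.

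It remains to dispose of the degenerate case $k=0$, in which there are no photon spheres but $n>1$ non-degenerate black holes in static electro-vacuum equilibrium. Here Theorem \ref{thm:main} does not apply directly, and I would instead invoke Masood-ul-Alam's electro-vacuum black hole uniqueness theorem \cite{MuA}: its rigidity statement forces the exterior region to be the \RN manifold exterior to a single connected non-degenerate horizon, so $n=1$, again contradicting $k+n>1$. Combining the two cases shows that no configuration with $k+n>1$ can exist.

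The main obstacle is not the logical skeleton above but the verification that the exterior region genuinely satisfies the hypotheses of Theorem \ref{thm:main} and that the incorporation of horizon boundaries is legitimate. Concretely, I would need to confirm that cutting along each photon sphere produces a smooth electro-vacuum electrostatic system with that photon sphere as a bona fide inner boundary---this is precisely where the \emph{very compact} assumption, that each body resides behind its own photon sphere, is used to guarantee the removed interior is matter and the retained exterior is vacuum. I would then check that the footnote argument of Remark \ref{rem:multiple} applies: each non-degenerate horizon, having $N=0$, vanishing mean curvature, vanishing trace-free second fundamental form, and constant induced scalar curvature, has exactly the geometry and regularity of the minimal boundary $\mathfrak{B}$ produced by the neck-gluing of Step~1, so that the doubling of Step~2 and the conformal argument of Step~3 go through simultaneously for $\mathfrak{B}$ and for the original horizons. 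Once this bookkeeping is in place, the positive mass theorem rigidity of Step~4 applies verbatim and the contradiction follows.
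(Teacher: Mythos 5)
Your proposal is correct and follows essentially the same route as the paper: delete the bodies, observe that the remaining exterior region is an electro-vacuum electrostatic system with the sub-extremal photon spheres (and any horizons) as inner boundary, and apply Theorem \ref{thm:main} together with Remark \ref{rem:multiple}, the rigidity conclusion (a single connected photon sphere boundary and no horizon in the \RN exterior region) contradicting $k+n>1$. Your separate treatment of the degenerate case $k=0$ via Masood-ul-Alam's black hole uniqueness theorem \cite{MuA} is a detail the paper leaves implicit---its one-paragraph proof tacitly assumes a nonempty photon sphere boundary---so that extra care is a point in your favor rather than a deviation.
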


To be specific, the term \emph{static equilibrium} is interpreted here as referring to an electrostatic electro-vacuum system $(\overline{M}^{3},\overline{g},\overline{N},\overline{\Phi})$, asymptotic to \RN and geodesically complete up to (possibly) an inner boundary consisting of one or multiple \emph{black holes}, defined as sections of a Killing horizon (or in other words consisting of totally geodesic topological spheres satisfying $\overline{N}=0$). Furthermore, a \emph{body} is meant to be a bounded domain $\Omega\subset\overline{M}^{3}$ where the (electrostatic) Einstein equations hold with a right hand side coming from the energy momentum tensor of a matter model satisfying the dominant energy condition $\my{\overline{g}}{\Scal}\geq0$. We consider a body $\Omega$ to be \emph{very compact} if it creates a sub-extremal photon sphere $\surf$ according to Definition \ref{def:sub-extremal} that, without loss of generality, arises as its boundary, $\surf=\partial\Omega$. Naturally, all bodies are 
implicitly assumed to be disjoint. Outside all bodies, the system is assumed to satisfy the electrovacuum 
equations \eqref{SMEelectrovac1}-\eqref{SMEelectrovac3}.

Theorem \ref{thm:main} then applies to the spacetime $(\mathfrak{L}^{4}=\spacet,\mathfrak{g}=-\overline{N}^{2}dt^{2}+\overline{g}\vert_{\slice})$, where $\slice:=\overline{M}^{3}\setminus\cup_{i=1}^{I}\Omega_{i}$, and $\emptyset\neq\Omega_{i}\subset\overline{M}^{3}$, $i\in\lbrace1,\dots,I\rbrace$, are all the bodies in the system. We appeal to Remark \ref{rem:multiple} if black holes are present in the configuration.\qed

\smallskip
Indeed, while photon spheres might be most well known from the electro-vacuum \RN spacetime \eqref{RNmetric}, many astrophysical objects are believed to be surrounded by photon spheres, see e.\,g.\ \cite{LY} and references cited therein.
\newpage

\appendix\section{Sub-extremality}\label{app:sub-extremal}
Theorem \ref{thm:main} only applies to what we call \emph{sub-extremal} photon spheres in Definition \ref{def:sub-extremal}. This sub-extremality condition is used twice in the proof of Theorem \ref{thm:main}: First, it is used to ensure that the $(\mu_i,q_i)$-\RN neck with $\mu_i$ and $q_i$ defined as in \eqref{def:mui} and \eqref{def:qi}, respectively, belongs to a sub-extremal \RN spatial slice and thus possess a non-degenerate horizon which allows for doubling. Sub-extremality of $(\mu_i,q_i)$-\RN---in the sense that $q_i^2<\mu_i^2$---can indeed be seen to be equivalent to $\Sigma_i$ being sub-extremal in the sense that $H_ir_i>1$ for each $i=1,\dots,I$. Second, we used the sub-extremality condition, $H_ir_i>1$ for all $i=1,\dots,I$,  to ensure that the conformal factor $\Omega$ introduced in Step 4 stays strictly positive, see Step 3. Again, the condition $\Omega>0$ can be seen to be equivalent to $H_ir_i>1$ for all $i=1,\dots,I$ via \eqref{eq:constraintqi} by \eqref{eq:sign}, \eqref{eq:sign2}.

In the following, we will argue that the terminology introduced in Definition \ref{def:sub-extremal} is justified beyond its usefulness in the proof of Theorem \ref{thm:main}. This justification only applies to systems that possess a connected photon sphere boundary (and no additional black hole boundary components) and extends arguments from \cite{LY}. It is well-known that the lapse function $N$ and the electric potential $\Phi$ of an electrostatic, electro-vacuum 
system $(M^3,g,N,\Phi)$ asymptotic to Reissner-Nordstr\"om of total mass $M$ and charge $Q$, with suitable inner boundary (e.\,g.\ (Killing horizon) \cite{Heusler} or photon sphere \cite{LY}), satisfies the functional relationship  
\begin{align}\label{eq:functional}
 N^2&=1+\Phi^2-2\frac{M}{Q}\Phi
\end{align}
unless the total charge $Q$ vanishes (in which case it is clearly justified to call the photon sphere inner boundary sub-extremal). The \emph{Komar mass} $m$ of the inner boundary $\surf=\partial M^3$ is given by
\begin{align}\label{def:m}
m&:=\frac{1}{4\pi}\int_{\surf}\nu(N)\,dA=\frac{\lvert\surf\rvert_{\sigma}}{4\pi}\nu(N)=r^{2}\,\nu(N),
\end{align}
where $dA$ denotes the area measure with respect to $\sigma$ and $r$ denotes the area radius, see \eqref{eq:arearadius}. By the divergence theorem, one then obtains
\begin{align*}
 M&\stackrel{\eqref{def:m}}{=}m+\frac{1}{4\pi}\int_{M^3}\frac{\vert d\Phi\vert^2}{N}\,dV\\
 &\;=\;m-\frac{1}{4\pi}\left(\int_{M^3}\Phi\underbrace{\diver\left(\frac{\grad\Phi}{N}\right)}_{=0\text{ by \eqref{SMEelectrovac1}}}\,dV-\underbrace{\lim_{r\to\infty}\int_{\mathbb{S}^2_r}\Phi\frac{\nu(\Phi)}{N}\,dA}_{=0\text{ by \eqref{AF}, \eqref{NAF}, \eqref{PhiAF}}}+\int_\Sigma\Phi\frac{\nu(\Phi)}{N}\,dA\right)\\
 &\stackrel{\eqref{def:qi}}{=}m+\Phi_0Q,
\end{align*}
where $dV$ denotes the volume measure on $M^3$ induced by $g$, and $\Phi_0:=\Phi\vert_\Sigma$ is constant by assumption.

Plugging this into \eqref{eq:functional} as $\Phi_0=(M-m)/Q$, we find 
\begin{align}\label{eq:N0}
N_0^2=\frac{Q^2+m^2-M^2}{Q^2},
\end{align}
where $N_0:=N\vert_\Sigma$ is constant, and thus
\begin{align}
H^2\,\frac{Q^2+m^2-M^2}{Q^2}&\stackrel{\eqref{eq:N0}}{=}H^2 N_0^2\stackrel{\eqref{eq:prop'}}{=}4\left(\nu(N)\right)^2\stackrel{\eqref{def:m}}{=}4\frac{m^2}{r^4}  \,.
\end{align}
Using \eqref{eq:constraintqi}, this is equivalent to,
\begin{align}
\frac{1}{H^2r^2}& =1+\frac{Q^2-M^2}{4m^2}
\end{align}
so that $Q^2<M^2$ if and only if $Hr>1$ (sub-extremal case), $Q^2=M^2$ if and only if $Hr=1$ (extremal case), and $Q^2>M^2$ if and only if $Hr<1$ (super-extremal case) holds a priori---at least for a (connected) photon sphere in electro-vacuum, see also Lemma \ref{untrapped}. We end by remarking that the Majumdar-Papapetrou spacetimes with more than one mass \cite[p.~161ff]{Heusler} do not possess any photon spheres as can be verified by direct computation using Definition \ref{def:photo}.

\vfill
\bibliographystyle{amsplain}
\bibliography{photon-sphere}
\vfill
\end{document}